 \newtheorem{thm}{Theorem}[section]
 \newtheorem{cor}[thm]{Corollary}
 \newtheorem{lem}[thm]{Lemma}
 \theoremstyle{definition}
 \newtheorem{defn}[thm]{Definition}
 \theoremstyle{remark}
 \numberwithin{equation}{section}
\begin{document}

%
%
%
%
%
%
%
%
%

\title[Vectorial eigenfunctions]
 {On the uniqueness of eigenfunctions for the vectorial $p$-Laplacian}

\author{Ryan Hynd}
\address{Department of Mathematics\br
University of Pennsylvania\br 
David Rittenhouse Laboratory\br 
209 South 33rd Street\br
Philadelphia, PA 19104-6395, USA}
\email{rhynd@math.upenn.edu}
\author{Bernd Kawohl}
\address{Department of Mathematics and Computer Science\br
Cologne University\br
Albertus-Magnus-Platz\br
D-50923 Cologne, Germany}
\email{kawohl@math.uni-koeln.de}
\author{Peter Lindqvist}
\address{Department of Mathematical Sciences\br
Norwegian University of Science and Techniology\br
N-7491 Trondheim, Norway}
\email{peter.lindqvist@ntnu.no}
\subjclass{Primary 35P30, Secondary 35J30, 47J30, 49R50}

\keywords{vectorial $p$-Laplacian, eigenfunction}

\dedicatory{}

\begin{abstract}
We study a non-linear eigenvalue problem for vector-valued eigenfunctions and give a succinct uniqueness proof for minimizers of the associated Rayleigh quotient.
\end{abstract}

\maketitle
\section{Introduction}

While the nonlinear Dirichlet eigenvalue problem for the equation 
\begin{equation*}
\rm{div}(|\nabla u|^{p-2}\nabla u)+\lambda |u|^{p-2}u=0
\end{equation*}
has been thoroughly investigated for scalar functions $u:\Omega\mapsto \mathbb{R}$ on a bounded domain $\Omega$ in $\mathbb{R}^n$ and for $p\in (1,\infty)$, the corresponding vectorial problem for ${\bf u}=(u_1,u_2,\ldots, u_N)$ has gained considerably less attention.
The Dirichlet problem of minimizing the Rayleigh quotient 
\begin{equation}\label{I_N}
I_N({\bf v})=\frac{\displaystyle\int_\Omega(|\nabla v_1|^2 + |\nabla v_2|^2+\dots+|\nabla v_n|^2)^\frac{p}{2}dx}{\displaystyle\int_\Omega(|v_1|^2 + |v_2|^2+\dots+|v_n|^2)^\frac{p}{2}dx},\qquad 1<p<\infty,
\end{equation}
among all vector-valued functions ${\bf v}$ in the Sobolev space $W^{1,p}_0(\Omega;\mathbb{R}^N)$ leads to the Euler-Lagrange equations
\begin{equation}\label{ee}
\rm{div}(|D {\bf u}|^{p-2}\nabla u_k)+\Lambda |{\bf u}|^{p-2}u_k=0, \qquad k=1,2,\ldots,N.
\end{equation}
Here ${\bf u}\in W^{1,p}_0(\Omega;\mathbb{R}^N)$ is a minimizer, $|D{\bf u}|^2=|\nabla u_1|^2+\ldots+|\nabla u_N|^2$ and $|{\bf u}|^2=u_1^2+\dots+u_N^2$. The existence of a minimizer comes by the direct method in the calculus of variations and in this case we denote $I_N({\bf u})$ by $\Lambda_1$. In the linear case ($p=2$) the system (\ref{ee}) is decoupled into
\begin{equation*}
\Delta u_k+\Lambda u_k=0, \qquad k=1,2,\ldots, N. 
\end{equation*} 
Then everything reduces to the scalar case $N=1$ with Helmholtz's equation.

Not all solutions of \eqref{ee} are necessarily minimizers. $\Lambda_1$ is the smallest eigenvalue and there are higher eigenfunctions.

\begin{defn} We say that ${ \bf u}\in W^{1,p}_0(\Omega;\mathbb{R}^N)$ is an {\bf eigenfunction} if
\begin{equation*}
\int_\Omega |D {\bf u} |^{p-2}D {\bf u} \cdot D {\bf \varphi}\ dx= 
\Lambda \int_\Omega |{\bf u}|^{p-2}{\bf u} \cdot   {\bf \varphi} \ dx
\end{equation*}
for all testfunctions ${\bf \varphi}=(\varphi_1,\ldots,\varphi_N)\in C_0^\infty(\Omega; \mathbb{R}^N)$. Here $\Lambda$ is the correspon\-ding {\bf eigenvalue}.
\end{defn}
By elliptic regularity theory \cite{Li} it is known that ${\bf u} \in C(\Omega; \mathbb{R}^N)$. For irregular domains, however, ${\bf u}$ is not always in $C({\overline\Omega};\mathbb{R}^N)$. In any case the gradient is known to be well-defined in $\Omega$, because ${\bf u}\in C^{1,\alpha}_{\rm loc}(\Omega;\mathbb{R}^N)$ for a suitable $\alpha\in(0,1)$ depending on $p$. 

In what follows, we observe that if $\omega:\Omega\mapsto \mathbb{R}$ is an eigenfunction in the scalar case $N=1$, then for any constant vector ${\mathbf c}$
\begin{equation*} {\bf u} =(c_1\omega,\ldots , c_N\omega)={\bf c} \omega
\end{equation*}
is a vectorial eigenfunction. It turns out that the converse is true for the smallest eigenvalue. The following  result was proved by Brock and Man\'asevich in \cite{BM}, see also del Pino \cite{dP}. 

\begin{thm}\label{uniquenessA}
All minimizers ${\bf u}\in W^{1,p}_0(\Omega;\mathbb{R}^N)$ of $I_N$ are of the form ${\bf u}={\bf c}\omega$, where $\omega$ is the scalar minimizer.
\end{thm}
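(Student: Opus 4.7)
The plan is to reduce the vectorial problem to the scalar one by means of the modulus $|{\bf u}|$. The first step is the pointwise Kato--type inequality
\[
|\nabla|{\bf u}||^{2}\;\le\;|D{\bf u}|^{2}\qquad\text{a.e.\ in }\Omega,
\]
which follows from the identity $|{\bf u}|\nabla|{\bf u}|=\sum_{k=1}^{N}u_{k}\nabla u_{k}$ together with the Cauchy--Schwarz inequality applied coordinate by coordinate in $\mathbb{R}^N$. Since $|{\bf u}|\in W_{0}^{1,p}(\Omega)$, integrating the $p$-th powers gives $I_{1}(|{\bf u}|)\le I_{N}({\bf u})=\Lambda_{1}$. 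Conversely, if $\omega$ is a scalar principal eigenfunction with eigenvalue $\lambda_{1}$ and ${\bf c}\in\mathbb{R}^{N}$ is any unit vector, a direct computation gives $|D({\bf c}\omega)|=|\nabla\omega|$ and $|{\bf c}\omega|=|\omega|$, so $I_{N}({\bf c}\omega)=\lambda_{1}$ and therefore $\Lambda_{1}\le\lambda_{1}$.

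Combining the two bounds yields $\Lambda_{1}=\lambda_{1}$, and the Kato inequality must hold as an equality almost everywhere. In particular, $|{\bf u}|$ is itself a scalar principal eigenfunction. I would then analyse the equality case: at any point where ${\bf u}\neq 0$, equality in the coordinate-wise Cauchy--Schwarz forces the identities
\[
u_{k}\partial_{i}u_{j}\;=\;u_{j}\partial_{i}u_{k},\qquad i=1,\dots,n,\quad j,k=1,\dots,N,
\]
so that $\partial_{i}(u_{k}/u_{j})=0$ wherever $u_{j}\neq 0$. Equivalently, the unit vector field ${\bf u}/|{\bf u}|$ has vanishing gradient and is therefore locally constant on the open set $\{{\bf u}\neq 0\}$.

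To upgrade this to the global statement ${\bf u}={\bf c}\,\omega$, I would invoke the strict positivity of the scalar principal eigenfunction $|{\bf u}|$ on the connected domain $\Omega$, a standard consequence of the Harnack inequality for nonnegative solutions of the scalar $p$-Laplace eigenvalue equation. Then $|{\bf u}|>0$ throughout $\Omega$, so $\{{\bf u}\neq 0\}=\Omega$ is connected, and the $C^{1,\alpha}_{\mathrm{loc}}$ regularity recalled in the introduction forces ${\bf u}/|{\bf u}|$ to be a single constant vector ${\bf c}$ on all of $\Omega$; thus ${\bf u}={\bf c}\,|{\bf u}|={\bf c}\,\omega$. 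The main obstacle I anticipate is making the passage from the pointwise Cauchy--Schwarz equality to the global identification fully rigorous in the $W^{1,p}$ framework: one must justify the Kato inequality and its equality case for Sobolev vector fields, and one relies on the regularity and on the Harnack/positivity theory for the scalar $p$-Laplacian, none of which is entirely elementary for general $p\in(1,\infty)$.
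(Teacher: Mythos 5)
Your proposal is correct and follows essentially the same route as the paper: set $w=|{\bf u}|$, establish $|\nabla w|\le |D{\bf u}|$ a.e., compare $\lambda_1$ and $\Lambda_1$ using the test function ${\bf c}\omega$, invoke Harnack's inequality for $w>0$, and exploit the equality case to obtain $u_j\nabla u_k=u_k\nabla u_j$, hence $\nabla(u_k/w)=0$ and ${\bf u}={\bf c}w$ on the connected domain $\Omega$. The only cosmetic difference is that you use coordinate-wise Cauchy--Schwarz where the paper uses Lagrange's identity, which is the same inequality with its deficit term written out explicitly.
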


We shall present a streamlined proof of the theorem above using the identity of Lagrange (\ref{VecLagrange}). In the case $N=1$ the scalar minimizer is known to be unique, except that it can be multiplied by constants.  While \cite{BK} or \cite {KL} contain fairly simple proofs for the scalar case, the proof given here does not even require the use of Jensen's inequality.

The vectorial case in only one independent variable, in which $\Omega$ is just an open interval, was studied by M. Del Pino, who proved in \cite {dP} that all solutions 
${\bf u}={\bf u}(t)=(u_1(t),\dots, u_N(t))$ of the problem
\begin{equation}
\begin{cases}
\displaystyle \frac{d}{dt}\left(|{\bf u}'(t)|^{p-2}{\bf u}'(t)\right) + \Lambda |{\bf u}(t)|^{p-2}{\bf u}(t) = 0 \qquad{\rm in }\ (0,1), \\
\hspace{3.7truecm}{\bf u}(0)={\bf u}(1) = 0,
\end{cases}
\end{equation}
are just copies of the scalar ones, i.e. ${\bf u}={\bf c}\omega$. Thus the previous theorem is valid for {\bf all} eigenfunctions. Since his proof is not easily available, for the benefit of the reader we present it in Section 3 below, based on an excerpt of his thesis. 

Finally we study the vectorial fractional Rayleigh quotient
\begin{equation*}
J_N({\bf v})=\frac{\displaystyle\int_{\mathbb{R}^n}\!\!\int_{\mathbb{R}^n}\frac{|{\bf v}(y)-{\bf v}(x)|^p}{|y-x|^{n+sp}} \ dx\ dy}{\displaystyle\int_\Omega|{\bf v}(x)|^p\ dx}
\end{equation*}
for $s\in (0,1)$ and ${\bf v}\in W^{s,p}_0(\Omega;\mathbb{R}^N)$ with ${\bf v}=0$ in $\mathbb{R}^n\setminus \Omega$. The scalar case $N=1$ was studied in \cite{LL} and \cite{FP}. We shall show that again the vectorial minimizers are just copies of the scalar ones  in Theorem \ref{fracthm} below. It is worth mentioning that that the isolation of the first eigenvalue (i.e. the minimum of the Rayleigh quotient) is known only in the following cases: the scalar case $N=1$, the linear case $p=2$ and the o.d.e. case $n=1$. Establishing the \lq\lq spectral gap\rq\rq $ $ for the general case seems to be an open problem. 

\section{Vectorial Minimizers}
We will prove Theorem 1.2 by employing Lagrange's identity, which asserts
\begin{equation}\label{VecLagrange}
\left|\sum^N_{i=1}t_i{\bf v}_i\right|^2=\sum^N_{i=1}(t_i)^2\sum^N_{i=1}\left|{\bf v}_i\right|^2-\sum_{1\le i<j\le N}|t_i{\bf v}_j-t_j{\bf v}_i|^2
\end{equation}
for $t_1,\dots, t_N\in \mathbb{R}$ and ${\bf v}_1,\dots ,{\bf v}_N\in \mathbb{R}^d$.   This identity is usually stated for $d=1$. Nevertheless, it is valid for all dimensions $d\ge 1$.  

\begin{proof}[Proof of Theorem 1.2] 
Let ${\bf u}=(u_1,u_2,\ldots, u_N)$  be a minimizer of the Rayleigh quotient \eqref{I_N} on $W^{1,p}_0(\Omega;\mathbb{R}^N)$ and $\Lambda_1=I_N({\bf u})$. Set $w=|{\bf u}|$. It is routine to verify $w\in W_0^{1,p}(\Omega)$, and direct computation gives 
\begin{equation}\label{BerndIdentity}
w\nabla w=\sum^N_{i=1}u_i\nabla u_i
\end{equation}
on $\{w>0\}$. By Lagrange's identity \eqref{VecLagrange}, we also have
\begin{equation}\label{Lagrangew}
w^2|\nabla w|^2=w^2|D{\bf u}|^2-\sum_{1\le i<j\le N}|u_i\nabla u_j-u_j\nabla u_i|^2
\end{equation}
on $\{w>0\}$. As $\nabla w=0$ almost everywhere on $\{w=0\}$,  we deduce
\begin{equation}\label{DwInequality}
|\nabla w|\le |D{\bf u}|
\end{equation}
almost everywhere in $\Omega$.  Furthermore,  
\begin{equation}\label{EigenvalueInequality}
\lambda_1\le \frac{\displaystyle\int_{\Omega}|\nabla w|^pdx}{\displaystyle\int_{\Omega}|w|^pdx}\le \frac{\displaystyle\int_{\Omega}|D{\bf u} |^pdx}{\displaystyle\int_{\Omega}|{\bf u}|^pdx}= \Lambda_1.
\end{equation}
Here $\lambda_1=\inf I_1$ is the smallest scalar eigenvalue.

\par Now suppose $u\in W^{1,p}_0(\Omega, \mathbb{R})$ is a scalar first eigenfunction and ${\bf c}\in\mathbb{R}^N$ with ${\bf c}\neq 0$. Then  ${\bf v}={\bf c} u\in W^{1,p}_0(\Omega, \mathbb{R}^N)$ and
$$ 
\Lambda_1\le \frac{\displaystyle\int_{\Omega}|D{\bf v} |^pdx}{\displaystyle\int_{\Omega}|{\bf v}|^pdx}= \frac{\displaystyle |{\bf c}|^p\int_{\Omega}|\nabla u|^pdx}{\displaystyle|{\bf c}|^p\int_{\Omega}|u|^pdx}=\frac{\displaystyle\int_{\Omega}|\nabla u|^pdx}{\displaystyle\int_{\Omega}|u|^pdx}=\lambda_1.
$$
In view of \eqref{EigenvalueInequality}, $\lambda_1=\Lambda_1$, so $w$ is necessarily a scalar first eigenfunction and \emph{equality must hold almost everywhere in \eqref{DwInequality}}.  Harnack's inequality for the $p$-Laplacian (see \cite{T}) also gives $w>0$ in $\Omega$. Therefore, by \eqref{Lagrangew}
\begin{equation}\label{loguijIdentity}
u_i\nabla u_j=u_j\nabla u_i \quad\text{ almost everywhere in }\Omega
\end{equation}
for all $i,j=1,\dots, N$.  Combining \eqref{loguijIdentity} and \eqref{BerndIdentity} gives
$$
w^2\nabla u_i=\sum^N_{j=1}u_j(u_j\nabla u_i)= \sum^N_{j=1} u_j (u_i\nabla u_j)=u_i \sum^N_{j=1} u_j\nabla u_j=u_i(w \nabla w).
$$
 That is, 
$$
w\nabla u_i=u_i \nabla w
$$
almost everywhere for $i=1,\dots, N$.  Since 
$$
\nabla\left(\frac{u_i}{w}\right)=\frac{w\nabla u_i-u_i \nabla w}{w^2}=0
$$
almost everywhere and $\Omega$ is connected, $u_i=c_iw$ for some $c_i\in \mathbb{R}$ and $i=1,\dots, N$. We conclude
$$
{\bf u}=(c_1,\dots, c_N) w={\bf c} w.
$$
\end{proof}

\section{One independent variable}
In this section we treat the o.d.e. case $n=1$, based on an excerpt of Manuel del Pino's work \cite{dP} in an interval, say $(0,1)\subset\mathbb{R}$. The Euler-Lagrange equations \eqref{ee} for ${\bf u}(t)=(u_1(t),\ldots, u_N(t))$ reduce to ordinary differential equations
\begin{equation}\label{N1ODE}
-\left(|{\bf u}'|^{p-2}{\bf u}'\right)'=\Lambda|{\bf u}|^{p-2}{\bf u}.
\end{equation}
A smooth ${\bf u}:(0,\infty)\rightarrow \mathbb{R}^N$ which satisfies \eqref{N1ODE},
$$
{\bf u}(0)=0,\text{ and } {\bf u}'(0)={\bf c}
$$
is 
\begin{equation}\label{explicitVecSoln}
{\bf u}(t)=\frac{{\bf c}}{\Lambda^{1/p}} \omega\left(\Lambda^{1/p}t\right).
\end{equation}
Here $\omega:\mathbb{R}\rightarrow \mathbb{R}$ is the solution of
\begin{equation}\label{littleuode}
\begin{cases}
-\left(|\omega'|^{p-2}\omega'\right)'=|\omega|^{p-2}\omega, \quad\text{ in } (0,1)\\
\qquad \omega(0)=0\quad \omega'(0)=1.
\end{cases}
\end{equation}
Such a scalar function can be found through integration. Moreover, each scalar eigenfunction on $\Omega=(0,1)$ is given by a multiple of
$$
\omega\left(k\lambda_p^{1/p}t\right)
$$
for some $k\in \mathbb{N}$. The corresponding eigenvalue (see \cite{E}, \cite{O}) is $k^p\lambda_p$ with 
$$
\lambda_p=\frac{(2\pi)^p(p-1)}{[p\sin(\pi/p)]^p}.
$$

\par We claim the initial value problem associated with the o.d.e.  \eqref{N1ODE} admits a unique solution.  This will be essentially due to the 
following lemma. 

\begin{lem}
Suppose ${\bf c}\in \mathbb{R}^N$ with ${\bf c}\neq 0$ and $t_0\ge 0$. There is $\epsilon>0$ so that the initial value problem
\begin{equation}\label{Uode}
\begin{cases}
-\left(|{\bf u}'|^{p-2}{\bf u}'\right)'=|{\bf u}|^{p-2}{\bf u}, \quad \text{ in } (t_0,t_0+\epsilon)\\
\qquad{\bf u}(t_0)=0\quad {\bf u}'(t_0)={\bf c}
\end{cases}
\end{equation}
has a unique solution. 
\end{lem}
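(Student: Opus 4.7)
The plan is to recast the second-order ODE as a first-order system, rewrite it as a Volterra integral equation, and then apply a fixed-point argument, paying attention to where the vector field fails to be Lipschitz. Introduce the conjugate exponent $q=p/(p-1)$ and set ${\bf w}=|{\bf u}'|^{p-2}{\bf u}'$; since $v\mapsto|v|^{p-2}v$ is a homeomorphism of $\mathbb{R}^N$ with inverse $w\mapsto|w|^{q-2}w$, the system \eqref{Uode} is equivalent to
\begin{equation*}
{\bf u}'=|{\bf w}|^{q-2}{\bf w},\qquad {\bf w}'=-|{\bf u}|^{p-2}{\bf u},\qquad ({\bf u}(t_0),{\bf w}(t_0))=(0,|{\bf c}|^{p-2}{\bf c}).
\end{equation*}
A continuous pair $({\bf u},{\bf w})$ on $[t_0,t_0+\epsilon]$ solves this system if and only if it is a fixed point of
\begin{equation*}
T({\bf u},{\bf w})(t)=\left(\int_{t_0}^t|{\bf w}(s)|^{q-2}{\bf w}(s)\,ds,\ |{\bf c}|^{p-2}{\bf c}-\int_{t_0}^t|{\bf u}(s)|^{p-2}{\bf u}(s)\,ds\right).
\end{equation*}

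For existence, I exploit that ${\bf w}(t_0)\ne 0$. On a closed ball $B_r$ around $(0,|{\bf c}|^{p-2}{\bf c})$ in $C([t_0,t_0+\epsilon],\mathbb{R}^{2N})$ with $r<\tfrac12|{\bf c}|^{p-1}$, the first integrand ${\bf w}\mapsto|{\bf w}|^{q-2}{\bf w}$ is smooth (and hence Lipschitz) while the second integrand is bounded by $r^{p-1}$. Elementary estimates give $T(B_r)\subset B_r$ provided $\epsilon$ is small. When $p\ge 2$, the map ${\bf u}\mapsto|{\bf u}|^{p-2}{\bf u}$ is also Lipschitz on $B_r$, so Banach's contraction principle yields both existence and uniqueness in one stroke. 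When $1<p<2$, this map is only $(p-1)$-H\"older at the origin, but $T(B_r)$ is still equicontinuous and pre-compact, so Schauder's theorem supplies existence.

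Uniqueness for $1<p<2$ is the delicate step. Since ${\bf u}'(t_0)={\bf c}\ne 0$, any solution satisfies ${\bf u}(t)={\bf c}(t-t_0)+o(t-t_0)$, so after shrinking $\epsilon$ we have the linear lower bound $|{\bf u}(t)|\ge\tfrac12|{\bf c}|(t-t_0)$ on $[t_0,t_0+\epsilon]$. On the set $\{|v|\ge\delta\}$ the map $v\mapsto|v|^{p-2}v$ is Lipschitz with constant $\le C\delta^{p-2}$, so for any two such solutions $({\bf u}_1,{\bf w}_1)$ and $({\bf u}_2,{\bf w}_2)$,
\begin{equation*}
\big||{\bf u}_1(s)|^{p-2}{\bf u}_1(s)-|{\bf u}_2(s)|^{p-2}{\bf u}_2(s)\big|\le K(s-t_0)^{p-2}|{\bf u}_1(s)-{\bf u}_2(s)|.
\end{equation*}
Because $p-2>-1$, the weight $(s-t_0)^{p-2}$ is integrable near $t_0$; plugging this into the difference of the two Volterra equations (together with the Lipschitz estimate for ${\bf w}\mapsto|{\bf w}|^{q-2}{\bf w}$) and applying a Gronwall inequality with singular kernel drives $\|{\bf u}_1-{\bf u}_2\|_\infty+\|{\bf w}_1-{\bf w}_2\|_\infty$ to zero. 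The main obstacle is exactly this singular Gronwall step: its feasibility hinges on the linear lower bound $|{\bf u}(t)|\gtrsim t-t_0$, and the hypothesis ${\bf c}\ne 0$ is precisely what makes that bound available.
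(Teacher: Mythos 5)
Your proposal is correct and follows essentially the same route as the paper: the same first-order reformulation, Picard--Lindel\"of/contraction for $p\ge 2$, and for $1<p<2$ the same key observation that ${\bf c}\neq 0$ forces $|{\bf u}(s)|\ge\tfrac12|{\bf c}|(s-t_0)$, making the singular weight $(s-t_0)^{p-2}$ integrable so that a Gronwall/contraction estimate closes the uniqueness argument. The only cosmetic differences are that you obtain existence for $1<p<2$ via Schauder rather than from the explicit solution \eqref{explicitVecSoln}, and you phrase the final step as a singular-kernel Gronwall bound where the paper derives the equivalent estimate $\max|{\bf u}-{\bf z}|\le C\epsilon^{p}\max|{\bf u}-{\bf z}|$ directly.
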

\begin{proof}
Without any loss of generality, we may suppose $t_0=0$. We note that the second-order
initial value problem \eqref{Uode} is equivalent to the following first order one
\begin{equation*}\label{UVode}
\begin{cases}
\quad{\bf u}'=|{\bf v}|^{q-2}{\bf v} \qquad\text{ in } (0,\epsilon),\\
\quad{\bf v}'=-|{\bf u}|^{p-2}{\bf u} \quad\,\text{ in } (0,\epsilon),\\
\quad{\bf u}(0)=0,\qquad {\bf v}(0)=|{\bf c}|^{p-2}{\bf c}
\end{cases}
\end{equation*}
for ${\bf u}$ and ${\bf v}$.  Here $q=p/(p-1)$.
Furthermore, if $p\ge 2$, then ${\bf u}\mapsto |{\bf u}|^{p-2}{\bf u}$ is continuously differentiable on $\mathbb{R}^N$, while ${\bf v}\mapsto |{\bf v}|^{q-2}{\bf v}$ is smooth in a neighborhood of each point in $\mathbb{R}^N$ aside from the origin. In particular, the mapping 
\begin{equation}\label{eFFmapp}
F({\bf u},{\bf v})=(|{\bf v}|^{q-2}{\bf v},-|{\bf u}|^{p-2}{\bf u})
\end{equation}
on $\mathbb{R}^N\times \mathbb{R}^N$ is continuously differentiable in a neighborhood of $({\bf u},{\bf v})=(0,|{\bf c}|^{p-2}{\bf c})$. The claim then follows from the Picard-Lindel\"of theorem.

\par Let us now consider the case $1<p<2$.  In view of \eqref{explicitVecSoln}, a solution ${\bf u}$ exists globally. We just need to show that it is unique. Suppose ${\bf u}$ and ${\bf z}$ are two solutions in question for some $\epsilon>0$. Then  
\begin{equation}\label{EasyUZid}\begin{cases}
\quad \displaystyle|{\bf v}'(t)|^{p-2}{\bf u}'(t)-|{\bf z}'(t)|^{p-2}{\bf z}'(t)\\
=\displaystyle\int^t_0\frac{d}{ds}\left(|{\bf u}'(s)|^{p-2}{\bf u}'(s)-|{\bf z}'(s)|^{p-2}{\bf z}'(s)\right)ds\\
=-\displaystyle\int^t_0\left(|{\bf u}(s)|^{p-2}{\bf u}(s)-|{\bf z}(s)|^{p-2}{\bf z}(s)\right)ds\quad\text{ for }t\in[0,\epsilon].
\end{cases}\end{equation}

\par Let us recall two basic inequalities 
\begin{align*}
||{\bf a}|^{p-2}{\bf a}-|{\bf b}|^{p-2}{\bf b}|&\le (3-p)|{\bf b}-{\bf a}|\int^1_0|{\bf a}+\tau({\bf b}-{\bf a})|^{p-2}d\tau\\
&\leq 2|{\bf b}-{\bf a}|\int^1_0|{\bf a}+\tau({\bf b}-{\bf a})|^{p-2}d\tau\\
\end{align*}
and 
\begin{equation*} 
||{\bf a}|^{p-2}{\bf a}-|{\bf b}|^{p-2}{\bf b}|\ge (p-1)|{\bf b}-{\bf a}|\left(1+|{\bf a}|^2+|{\bf b}|^2\right)^{\frac{p-2}{2}}\\
\end{equation*}
which hold for each ${\bf a},{\bf b}\in \mathbb{R}^N$ and $1<p\le 2$. Combining these inequalities with \eqref{EasyUZid} gives
\begin{align*}
(p-1)&|{\bf u}'(t)-{\bf z}'(t)|\left(1+|{\bf u}'(t)|^2+|{\bf z}'(t)|^2\right)^{\frac{p-2}{2}}\\
&\quad\le \left||{\bf u}'(t)|^{p-2}{\bf u}'(t)-|{\bf z}'(t)|^{p-2}{\bf z}'(t)\right|\\
&\quad\le \int^t_0\left||{\bf u}(s)|^{p-2}{\bf u}(s)-|{\bf z}(s)|^{p-2}{\bf z}(s)\right|ds\\
&\quad \le2 \max_{0\le s\le t}|{\bf u}(s)-{\bf z}(s)|\int^t_0\int^1_0\frac{1}{|{\bf u}(s)+\tau({\bf z}(s)-{\bf u}(s))|^{2-p}}d\tau ds\\
&\quad \le2 \max_{0\le s\le t}|{\bf u}(s)-{\bf z}(s)|\int^\epsilon_{0}\int^1_0\frac{1}{|{\bf u}(s)+\tau({\bf z}(s)-{\bf u}(s))|^{2-p}}d\tau ds\\
&\quad =2 \max_{0\le s\le t}|{\bf u}(s)-{\bf z}(s)|\int^1_0\int^\epsilon_{0}\frac{1}{|{\bf u}(s)+\tau({\bf z}(s)-{\bf u}(s))|^{2-p}}dsd\tau 
\end{align*}
for each $0\le t\le \epsilon$. 

\par As ${\bf u}(0)={\bf z}(0)={\bf c}\neq 0$, 
$$
{\bf u}(s)+\tau({\bf z}(s)-{\bf u}(s))= {\bf c}s+o(s)
$$
as $s\rightarrow 0$ uniformly in $\tau\in [0,1]$. Reducing $\epsilon>0$ if necessary, we may suppose
$$
|{\bf u}(s)+\tau({\bf z}(s)-{\bf u}(s))|\ge \frac{1}{2}|{\bf c}|s\quad\text{ for }s\in[0,\epsilon].
$$
It follows that 
$$
\int^\epsilon_{0}\frac{1}{|{\bf u}(s)+\tau({\bf z}(s)-{\bf u}(s))|^{2-p}}ds\le \left(\frac{1}{2}|{\bf c}|\right)^{p-2}\frac{\epsilon^{p-1}}{p-1}.
$$

\par Since ${\bf u}$ and ${\bf z}$ are continuously differentiable on $[0,\epsilon)$, we also have a constant $K$ such that 
$$
|{\bf u}'(t)|,|{\bf z}'(t)|\le K \quad\text{ for }t\in[0,\epsilon]. 
$$ 
Therefore, 
$$
\left(1+|{\bf u}'(t)|^2+|{\bf z}'(t)|^2\right)^{\frac{p-2}{2}}\ge (1+2K^2)^{\frac{p-2}{2}}.
$$
Putting these estimates together yields
$$
\max_{0\le t\le \epsilon}|{\bf u}'(t)-{\bf z}'(t)|\le\frac{2}{(p-1)^2}\frac{(|{\bf c}|/2)^{p-2}}{(1+2K^2)^{\frac{p-2}{2}}} \epsilon^{p-1}\cdot \max_{0\le t\le \epsilon}|{\bf u}(t)-{\bf z}(t)|$$
$$=C\epsilon^{p-1}\cdot \max_{0\le t\le \epsilon}|{\bf u}(t)-{\bf z}(t)|
$$
Furthermore, we can integrate the left hand side over $[0,\epsilon]$ to get 
$$
\max_{0\le t\le \epsilon}|{\bf u}(t)-{\bf z}(t)|\le C\epsilon^{p}\cdot \max_{0\le t\le \epsilon}|{\bf u}(t)-{\bf z}(t)|
$$
For $C\epsilon^p<1$ we get a contradiction unless $\max_{0\le t\le \epsilon}|{\bf u}(t)-{\bf u}(t)|=0$. 
\end{proof}
This leads to the following theorem. 
\begin{thm}
Suppose $\Lambda\neq 0$ and ${\bf a},{\bf b}\in \mathbb{R}^N$. 
The initial value problem
\begin{equation*}\label{Uodelong}
\begin{cases}
-\left(|{\bf u}'|^{p-2}{\bf u}'\right)'=\Lambda |{\bf u}|^{p-2}{\bf u} \quad \text{ in } (0,\infty),\\
\hskip1.46truecm{\bf u}(0)={\bf a},\qquad {\bf u}'(0)={\bf b}
\end{cases}
\end{equation*}
has a unique solution. 
\end{thm}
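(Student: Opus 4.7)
The strategy is to reduce everything to Lemma 3.1 (and its ``dual''), using a time rescaling to normalize $\Lambda$ to $\pm 1$, and a standard continuation argument to pass from local to global uniqueness. Setting ${\bf w}(s):={\bf u}(|\Lambda|^{-1/p}s)$ converts the equation with parameter $\Lambda$ to the same equation with $\mathrm{sgn}(\Lambda)=\pm 1$, so I may assume $\Lambda=\pm 1$. Global existence then follows from local existence together with the conserved energy $E(t)=\tfrac{p-1}{p}|{\bf u}'|^p+\tfrac{\Lambda}{p}|{\bf u}|^p$, which (for $\Lambda=-1$) yields $|{\bf u}'|^p\leq C(1+|{\bf u}|^p)$ and hence precludes finite-time blow-up via Gronwall.

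Given two solutions ${\bf u},{\bf z}$ sharing the same initial data $({\bf a},{\bf b})$, I would prove local uniqueness case by case. If ${\bf a}\neq{\bf 0}$ and ${\bf b}\neq{\bf 0}$, the first-order system from the lemma's proof has a $C^1$ right-hand side near $({\bf a},|{\bf b}|^{p-2}{\bf b})$, so Picard-Lindel\"of closes the argument. If ${\bf a}={\bf 0}$ and ${\bf b}\neq{\bf 0}$, this is exactly Lemma 3.1. If ${\bf a}={\bf b}={\bf 0}$, then $E\equiv 0$: for $\Lambda=1$ both nonnegative summands of $E$ vanish and ${\bf u}\equiv{\bf 0}$; for $\Lambda=-1$ one obtains $|{\bf u}'|=(p-1)^{-1/p}|{\bf u}|$, whence $\tfrac{d}{dt}|{\bf u}|^2\leq 2(p-1)^{-1/p}|{\bf u}|^2$, and Gronwall with $|{\bf u}(0)|=0$ forces ${\bf u}\equiv{\bf 0}$.

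The remaining case, ${\bf a}\neq{\bf 0}$ and ${\bf b}={\bf 0}$, is ``dual'' to Lemma 3.1 and is the one I expect to be the main obstacle. When $1<p\leq 2$ (so $q:=p/(p-1)\geq 2$) the dual map $|{\bf v}|^{q-2}{\bf v}$ is $C^1$ at the origin and Picard-Lindel\"of still applies. For $p>2$, I would mirror the second half of the lemma's proof applied to ${\bf m}(t):=|{\bf u}'(t)|^{p-2}{\bf u}'(t)$. Integrating the equation once yields
\[
|{\bf u}'|^{p-2}{\bf u}' - |{\bf z}'|^{p-2}{\bf z}' \;=\; -\Lambda\int_0^t\bigl(|{\bf u}|^{p-2}{\bf u}-|{\bf z}|^{p-2}{\bf z}\bigr)ds,
\]
whose right-hand side is bounded by $Ct\max_{[0,t]}|{\bf u}-{\bf z}|$ since ${\bf u},{\bf z}$ stay near ${\bf a}\neq{\bf 0}$. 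The left-hand side is inverted using the analogue of the two basic inequalities recalled in the lemma with $p$ replaced by $q$; the crucial point is that ${\bf m}(s)=-\Lambda|{\bf a}|^{p-2}{\bf a}\,s+o(s)$ grows linearly in $s$, so the otherwise singular factor $|{\bf m}(s)|^{q-2}$ is dominated by $s^{q-2}$, which is integrable on $(0,\epsilon)$ because $q>1$. The Gronwall-type closure at the end of the lemma then yields local uniqueness on a small interval, and connectedness promotes this to global uniqueness.
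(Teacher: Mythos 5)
Your proposal is correct and takes essentially the same route as the paper: pass to the first-order system with the conserved energy, apply Picard--Lindel\"of where the nonlinearity is locally $C^1$, invoke Lemma 3.1 when ${\bf u}(t_0)=0$, handle the case ${\bf u}'(t_0)=0$ by running the lemma's argument on the dual variable ${\bf v}=|{\bf u}'|^{p-2}{\bf u}'$ (which solves the same equation with $q$ in place of $p$), and then continue the local uniqueness globally. The only substantive difference is that you treat the sign of $\Lambda$ and the degenerate data ${\bf a}={\bf b}=0$ for $\Lambda<0$ explicitly, whereas the paper normalizes to $\Lambda=1$ by scaling and settles that case directly from the nonnegativity of the energy.
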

\begin{proof}
By scaling, we may suppose $\Lambda =1$.  This second order initial value problem is equivalent to the first order initial value problem for ${\bf u}$ and ${\bf v}$
\begin{equation*}
\begin{cases}
\quad{\bf u}'=|{\bf v}|^{q-2}{\bf v} \qquad\text{ in } (0,\infty),\\
\quad{\bf v}'=-|{\bf u}|^{p-2}{\bf u} \,\quad\text{ in } (0,\infty),\\
{\bf u}(0)={\bf a},\qquad {\bf v}(0)=|{\bf b}|^{p-2}{\bf b}.
\end{cases}
\end{equation*}
Here $q=p/(p-1)$. Moreover, ${\bf v}$ would satisfy
\begin{equation*}
\begin{cases}
-\left(|{\bf v}'|^{q-2}{\bf v}'\right)'= |{\bf v}|^{q-2}{\bf v} \quad\text{in } (0,\infty),\\
\hskip1.46truecm{\bf v}(0)=|{\bf b}|^{p-2}{\bf b},\qquad {\bf v}'(0)=-|{\bf a}|^{p-2}{\bf a}.
\end{cases}
\end{equation*}

\par Note that any solution ${\bf u}$ and ${\bf v}$ must also fulfill the identity 
$$
\frac{1}{p}|{\bf u}(t)|^p+\frac{1}{q}|{\bf v}(t)|^q=\frac{1}{p}|{\bf a}|^p+\frac{1}{q}|{\bf b}|^p.
$$
So if ${\bf a}={\bf b}=0$, then both ${\bf u}$ and ${\bf v}$ vanish identically and the unique solution in question is ${\bf u}={\bf v}\equiv 0$. Otherwise, suppose ${\bf a}$ or ${\bf b}$ is not zero. In this case both ${\bf u}(t)$ and ${\bf v}(t)$ cannot vanish simultaneously for any solution. If for a given $t_0$, ${\bf u}(t_0)\neq 0$ and ${\bf v}(t_0)\neq 0$, then we can uniquely continue this solution on $(t_0,t_0+\epsilon)$ for some $\epsilon>0$ as the mapping \eqref{eFFmapp} is Lipschitz in a neighborhood of $({\bf u}(t_0),{\bf v}(t_0))$. Alternatively, if ${\bf u}(t_0)= 0$ and ${\bf v}(t_0)\neq 0$, we can appeal to the lemma to 
uniquely continue the solution on $(t_0,t_0+\epsilon)$. If instead ${\bf u}(t_0)\neq 0$ and ${\bf v}(t_0)= 0$, we can apply the lemma to ${\bf v}$ as this function satisfies the same type of equation. In conclusion, at any time $t_0\ge 0$, we may uniquely continue the solution to a longer interval. It follows that a solution ${\bf u}$ is globally and uniquely defined. 
\end{proof}
Finally, we are able to conclude that each eigenfunction is a scalar one when $\Omega=(0,1)$. 
\begin{cor}
Suppose ${\bf u}\not \equiv 0$ solves
\begin{equation*}
\begin{cases}
-\left(|{\bf u}'|^{p-2}{\bf u}'\right)'=\Lambda |{\bf u}|^{p-2}{\bf u} \quad \text{in } (0,1),\\
\hskip1.46truecm{\bf u}(0)=0,\quad {\bf u}(1)=0,
\end{cases}
\end{equation*}
for some $\Lambda>0$. Then $\Lambda=k^p\lambda_p$ for some $k\in \mathbb{N}$ and 
\begin{equation*}
{\bf u}(t)=\frac{{\bf c}}{\Lambda^{1/p}} \omega\left(\Lambda^{1/p}t\right).
\end{equation*}
for some ${\bf c}\in \mathbb{R}^N$, where $\omega$ is the solution of \eqref{littleuode}.
\end{cor}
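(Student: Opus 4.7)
The plan is to deduce the corollary directly from Theorem 3.2 (uniqueness of the vectorial initial value problem) combined with the known list of scalar eigenvalues on $(0,1)$ recorded just above Lemma 3.1.

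First, I would set $\mathbf{c} := \mathbf{u}'(0)$. Since $\mathbf{u}(0) = 0$ and $\mathbf{u} \not\equiv 0$, I claim $\mathbf{c} \neq 0$: otherwise Theorem 3.2 with $\mathbf{a} = \mathbf{b} = 0$ would force $\mathbf{u} \equiv 0$, a contradiction. Next, the scaling ansatz \eqref{explicitVecSoln} produces
\begin{equation*}
\widetilde{\mathbf{u}}(t) := \frac{\mathbf{c}}{\Lambda^{1/p}}\,\omega\bigl(\Lambda^{1/p} t\bigr),
\end{equation*}
which is a solution of the same ODE satisfying $\widetilde{\mathbf{u}}(0) = 0$ and $\widetilde{\mathbf{u}}'(0) = \mathbf{c}$, using the initial data $\omega(0)=0$ and $\omega'(0)=1$ from \eqref{littleuode}. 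Applying Theorem 3.2 then gives $\mathbf{u} \equiv \widetilde{\mathbf{u}}$ on $[0,1]$, which is the asserted formula for $\mathbf{u}$.

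It then remains to pin down $\Lambda$ from the right boundary condition. Evaluating at $t=1$ and using $\mathbf{c} \neq 0$ yields $\omega(\Lambda^{1/p}) = 0$. From the scalar theory recalled above Lemma 3.1, the scalar eigenfunctions on $(0,1)$ are precisely the constant multiples of $t \mapsto \omega(k\lambda_p^{1/p} t)$ for $k \in \mathbb{N}$, and the scalar spectrum is exactly $\{k^p \lambda_p : k \in \mathbb{N}\}$. For any $s > 0$ with $\omega(s) = 0$, a direct rescaling shows that $t \mapsto \omega(st)$ solves the scalar ODE on $(0,1)$ with $v(0)=v(1)=0$ and eigenvalue $s^p$; hence $s^p \in \{k^p \lambda_p : k \in \mathbb{N}\}$. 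Applied to $s = \Lambda^{1/p}$, this forces $\Lambda = k^p \lambda_p$ for some $k \in \mathbb{N}$.

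The only step requiring any care is the identification of the positive zeros of $\omega$ with $\{k \lambda_p^{1/p}\}_{k\in\mathbb{N}}$; this rests on the complete enumeration of the scalar spectrum cited from \cite{E} and \cite{O}, so no additional ODE analysis is needed. Beyond that, the argument is an immediate consequence of the uniqueness of the vectorial IVP combined with the scaling identity \eqref{explicitVecSoln}.
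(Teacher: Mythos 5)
Your proposal is correct and follows essentially the same route as the paper: identify $\mathbf{c}=\mathbf{u}'(0)$, invoke the uniqueness theorem to equate $\mathbf{u}$ with the explicit scaled solution \eqref{explicitVecSoln}, and then use $\mathbf{u}(1)=0$ together with the known scalar spectrum to force $\Lambda=k^p\lambda_p$. The only difference is that you spell out two points the paper leaves implicit (that $\mathbf{c}\neq 0$ and that the positive zeros of $\omega$ are exactly $k\lambda_p^{1/p}$), which is a welcome but not substantively different elaboration.
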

\begin{proof}
By uniqueness, ${\bf u}$ has the stated form for ${\bf c}={\bf u}'(0)$ and is defined on $(0,1)$.  Since ${\bf u}(1)=0$, 
$$
\omega\left(\Lambda^{1/p}\right)=0
$$
This forces $\Lambda=k^p\lambda_p$.
\end{proof}

 \section{Vectorial Fractional Minimizers}

We now consider 
\begin{equation*}\label{OurFracRatio}
\Lambda^N_{s}=\inf_{{\bf v}}\frac{\displaystyle\int_{\mathbb{R}^n}\int_{\mathbb{R}^n}\frac{|{\bf v}(x)-{\bf v}(y)|^p}{|x-y|^{n+sp}}dxdy}{\displaystyle\int_{\Omega}|{\bf v}|^pdx}.
\end{equation*}
for $N\in \mathbb{N}$, $s\in (0,1)$, and $p\in (1,\infty)$.  In this infimum, ${\bf v}=(v_1,\dots, v_N)\in W^{s,p}_0(\Omega;\mathbb{R}^N)$ is assumed not to be identically $0$. It is a standard exercise to check that the infimum is attained. Using this fact, we will derive analogues of the assertions made above for the ``local" case.  
\begin{lem}\label{FracVecEigenLem}
For each $N\in \mathbb{N}$, 
$$
\Lambda^N_{s}=\Lambda^1_{s}.
$$
\end{lem}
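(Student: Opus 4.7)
The plan is to prove the two inequalities $\Lambda^N_s \le \Lambda^1_s$ and $\Lambda^N_s \ge \Lambda^1_s$ separately, following the same philosophy as the proof of Theorem 1.2 but with a much simpler pointwise ingredient: the reverse triangle inequality in $\mathbb{R}^N$ replaces Lagrange's identity \eqref{VecLagrange} and the gradient comparison \eqref{DwInequality}.

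For the easy direction $\Lambda^N_s \le \Lambda^1_s$, I would take a scalar minimizer $\omega \in W^{s,p}_0(\Omega)$ of the scalar fractional Rayleigh quotient and any nonzero ${\bf c} \in \mathbb{R}^N$. Setting ${\bf v}(x) = {\bf c}\,\omega(x)$ gives ${\bf v} \in W^{s,p}_0(\Omega;\mathbb{R}^N)$ and the identities $|{\bf v}(x)-{\bf v}(y)|^p = |{\bf c}|^p|\omega(x)-\omega(y)|^p$ and $|{\bf v}(x)|^p = |{\bf c}|^p|\omega(x)|^p$. The factor $|{\bf c}|^p$ cancels, so $J_N({\bf v}) = \Lambda^1_s$, and the infimum in the definition of $\Lambda^N_s$ gives the desired inequality.

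For the other direction $\Lambda^1_s \le \Lambda^N_s$, I would pick a vectorial minimizer ${\bf u}\in W^{s,p}_0(\Omega;\mathbb{R}^N)$ and set $w(x) = |{\bf u}(x)|$; this is the fractional analogue of the step $w=|{\bf u}|$ in the proof of Theorem 1.2. The reverse triangle inequality yields the pointwise bound
\begin{equation*}
\bigl||{\bf u}(x)|-|{\bf u}(y)|\bigr|^{p} \le |{\bf u}(x)-{\bf u}(y)|^{p}
\end{equation*}
for all $x,y \in \mathbb{R}^n$. Dividing by $|x-y|^{n+sp}$ and integrating over $\mathbb{R}^n\times\mathbb{R}^n$, I deduce that $w \in W^{s,p}_0(\Omega)$ (with zero extension outside $\Omega$ inherited from ${\bf u}$) and that its Gagliardo seminorm is dominated by that of ${\bf u}$. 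Since $|w(x)|^p = |{\bf u}(x)|^p$, the scalar Rayleigh quotient satisfies $J_1(w) \le J_N({\bf u}) = \Lambda^N_s$, and taking the infimum over scalar competitors gives $\Lambda^1_s \le \Lambda^N_s$.

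There is really no major obstacle here; the key observation is simply that the pointwise reverse triangle inequality plays the role of \eqref{DwInequality} in this nonlocal setting, and in fact the argument is cleaner because no analogue of Lagrange's identity is needed. The only minor points to mention are that the infimum is attained (so that ${\bf u}$ and $\omega$ actually exist as minimizers, as noted before the lemma), and that the zero Dirichlet condition $w=0$ in $\mathbb{R}^n\setminus\Omega$ is immediate from ${\bf u}=0$ there.
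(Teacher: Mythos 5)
Your proposal is correct and follows essentially the same route as the paper: the paper's Cauchy--Schwarz computation in \eqref{CS} is precisely a proof of the reverse triangle inequality you invoke for $w=|{\bf u}|$, and the easy direction via ${\bf v}={\bf c}\,\omega$ is identical. (Lagrange's identity indeed only enters later, in the uniqueness statement of Theorem \ref{fracthm}, not in this lemma.)
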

\begin{proof}
Suppose ${\bf u}\in W^{s,p}_0(\Omega;\mathbb{R}^N)$ is a minimizer for $\Lambda^N_{s}$ and set $\omega=|{\bf u}| \in W^{s,p}_0(\Omega)$. By the Cauchy-Schwarz inequality

\begin{equation}\label{CS}\begin{cases}
|\omega(x)-\omega(y)|^2& =|{\bf u}(x)|^2+|{\bf u}(y)|^2-2|{\bf u}(x)||{\bf u}(y)|\\
& \le |{\bf u}(x)|^2+|{\bf u}(y)|^2-2{\bf u}(x)\cdot {\bf u}(y)\\
&= |{\bf u}(x)-{\bf u}(y)|^2.
\end{cases}
\end{equation}
As a result, 
$$
\Lambda^1_{s}\le\frac{\displaystyle\int_{\mathbb{R}^n}\!\!\int_{\mathbb{R}^n}\frac{|\omega(x)-\omega(y)|^p}{|x-y|^{n+sp}}dxdy}{\displaystyle\int_{\Omega}|\omega|^pdx}\le \frac{\displaystyle\int_{\mathbb{R}^n}\!\!\int_{\mathbb{R}^n}\frac{|{\bf u}(x)-{\bf u}(y)|^p}{|x-y|^{n+sp}}dxdy}{\displaystyle\int_{\Omega}|{\bf u}|^pdx}=\Lambda^N_{s}.
$$

\par Alternatively, suppose $u$ is an extremal when $N=1$ and set ${\bf v} ={\bf c} u$ for some ${\bf c}\in \mathbb{R}^N$ with ${\bf c}\neq 0$. 
Then 
$$
\Lambda^N_{s} \le \frac{\displaystyle\int_{\mathbb{R}^n}\!\!\int_{\mathbb{R}^n}\!\!\frac{|{\bf v}(x)-{\bf v}(y)|^p}{|x-y|^{n+sp}}dxdy}{\displaystyle\int_{\Omega}|{\bf v}|^pdx}=
\frac{\displaystyle|{\bf c}|^p\!\!\int_{\mathbb{R}^n}\!\!\int_{\mathbb{R}^n}\!\!\frac{|u(x)-u(y)|^p}{|x-y|^{n+sp}}dxdy}{|{\bf c}|^p\displaystyle\int_{\Omega}|u|^pdx}=\Lambda^1_{s}.
$$
\end{proof}
\begin{thm}\label{fracthm}
Suppose ${\bf u}\in W^{s,p}_0(\Omega;\mathbb{R}^N)$ is a vector minimizer for $\Lambda_s^N$.  Then there is ${\bf c}\in \mathbb{R}^N\setminus\{0\}$ and a scalar minimizer $\omega\in W^{s,p}_0(\Omega;\mathbb{R})$ for  $\Lambda_s^1$ such 
that ${\bf u}={\bf c}\omega$. 
\end{thm}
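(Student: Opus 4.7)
The plan is to mimic the strategy behind Lemma~\ref{FracVecEigenLem} while additionally tracking when equality holds in the Cauchy-Schwarz step \eqref{CS}. Set $\omega=|{\bf u}|\in W^{s,p}_0(\Omega)$; then, because $\int_\Omega|\omega|^p\,dx=\int_\Omega|{\bf u}|^p\,dx$, the chain
\[
\Lambda^1_s \le \frac{\displaystyle\int_{\mathbb{R}^n}\!\!\int_{\mathbb{R}^n}\frac{|\omega(x)-\omega(y)|^p}{|x-y|^{n+sp}}\,dx\,dy}{\displaystyle\int_\Omega|\omega|^p\,dx} \le \frac{\displaystyle\int_{\mathbb{R}^n}\!\!\int_{\mathbb{R}^n}\frac{|{\bf u}(x)-{\bf u}(y)|^p}{|x-y|^{n+sp}}\,dx\,dy}{\displaystyle\int_\Omega|{\bf u}|^p\,dx}=\Lambda^N_s=\Lambda^1_s
\]
from the proof of Lemma~\ref{FracVecEigenLem} collapses to equality throughout. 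In particular, $\omega$ is itself a scalar minimizer for $\Lambda_s^1$ and the two double integrals coincide.

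Since the pointwise inequality \eqref{CS} gives $|\omega(x)-\omega(y)|^p\le|{\bf u}(x)-{\bf u}(y)|^p$ and the kernel $|x-y|^{-(n+sp)}$ is strictly positive off the diagonal, equality of the integrals forces equality in \eqref{CS} for almost every $(x,y)\in\mathbb{R}^n\times\mathbb{R}^n$. Unwinding that equality yields
\[
{\bf u}(x)\cdot{\bf u}(y)=|{\bf u}(x)|\,|{\bf u}(y)|\quad\text{for a.e.\ }(x,y),
\]
i.e.\ ${\bf u}(x)$ and ${\bf u}(y)$ are nonnegatively proportional for almost every pair.

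To finish, let $A=\{x\in\Omega:{\bf u}(x)\neq 0\}$; since ${\bf u}\not\equiv 0$, the set $A$ has positive Lebesgue measure. By Fubini's theorem I may fix some $x_0\in A$ for which the identity ${\bf u}(x_0)\cdot{\bf u}(y)=|{\bf u}(x_0)|\,|{\bf u}(y)|$ holds for almost every $y\in\mathbb{R}^n$. Setting ${\bf c}={\bf u}(x_0)/|{\bf u}(x_0)|\in\mathbb{R}^N\setminus\{0\}$, the equality case of Cauchy-Schwarz then gives ${\bf u}(y)=|{\bf u}(y)|{\bf c}=\omega(y){\bf c}$ for a.e.\ $y\in A$; on $\Omega\setminus A$ both sides vanish, so ${\bf u}={\bf c}\omega$ almost everywhere.

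The main obstacle, such as it is, is the measure-theoretic bookkeeping: converting equality of the two nonnegative double integrals into pointwise equality in \eqref{CS} a.e., and then applying Fubini to select a suitable base point $x_0$. Both steps are routine but deserve explicit mention. Compared with the local proof, no Harnack inequality or regularity theory is needed, because we are not differentiating ${\bf u}$; the vector identity drops out directly from the equality case of Cauchy-Schwarz.
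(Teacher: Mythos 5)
Your proof is correct and follows essentially the same route as the paper: force equality in the chain of inequalities from Lemma \ref{FracVecEigenLem}, deduce ${\bf u}(x)\cdot{\bf u}(y)=|{\bf u}(x)|\,|{\bf u}(y)|$ for a.e.\ $(x,y)$, and then fix a good base point to get proportionality. The only (cosmetic) difference is that you conclude via the equality case of Cauchy--Schwarz with $\omega=|{\bf u}|$ and ${\bf c}={\bf u}(x_0)/|{\bf u}(x_0)|$, whereas the paper passes through Lagrange's identity \eqref{VecLagrange} to get $u_i(x)u_j(y)=u_j(x)u_i(y)$ and takes $\omega=u_i$; these are equivalent ways of exploiting the same equality.
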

\begin{proof}
Suppose ${\bf u}=(u_1,\dots,u_N)$.  An inspection of our proof of Lemma \ref{FracVecEigenLem} shows that $|{\bf u}|\in  W^{s,p}_0(\Omega;\mathbb{R})$ is a scalar minimizer and 
$$
{\bf u}(x)\cdot {\bf u}(y)=|{\bf u}(x)|| {\bf u}(y)|
$$
for almost every $x,y\in \mathbb{R}^n$.  By Lagrange's identity \eqref{VecLagrange}, it must be that 
$$
u_i(x)u_j(y)=u_j(x)u_i(y)
$$
for all $i,j$ and almost every $x,y\in \mathbb{R}^n$. Since ${\bf u}$ doesn't vanish identically, there is some $i=1,\dots, N$ and $y\in \mathbb{R}^n$ for which $u_i(y)\neq 0$. Set $\omega=u_i$ and note that 
$$
u_j(x)=\frac{u_j(y)}{u_i(y)}u_i(x)=c_ju(x)
$$
for almost every $x\in \Omega$ and $j=1,\dots, N$.  That is, ${\bf u} ={\bf c} \omega$ with ${\bf c}=(c_1,\dots,c_N)$. As ${\bf u}$ is a vector minimizer, ${\bf c}\neq 0$ and $\omega$ is a scalar minimizer. 
\end{proof}

\subsection*{Acknowledgment}
We thank Manuel Del Pino for an excerpt from \cite{dP} and Erik Lindgren for a fruitful suggestion. Part of this material is based upon work supported by the Swedish Research Council under grant no. 2016-06596 while all three authors were participating in the research program \lq\lq Geometric Aspects of Nonlinear Partial Differential Equations\rq\rq $ $ at Institut Mittag-Leffler in Djursholm, Swe\-den, during the fall of 2022.

\end{document}